\providecommand{\keywords}[1]{\small \textbf{Keywords}: #1}
\definecolor{codedarkgreen}{RGB}{51, 133, 4}
\definecolor{codemaroon}{RGB}{133, 5, 63}
\definecolor{codeteal}{RGB}{0, 128, 96}
\lstdefinelanguage{Macaulay2}{
basicstyle=\small\ttfamily,
alsoletter=",
classoffset=1,
keywords={matrix,minors,gb,transpose,det,ideal,apply,subsets,ker,gens,fold,flatten,entries},
keywordstyle={\color{blue}},
classoffset=2,
morekeywords={from, to, list},
keywordstyle={\color{codemaroon}},
classoffset=3,
morekeywords={QQ},
keywordstyle={\color{codedarkgreen}},
classoffset=4,
morekeywords={MonomialOrder},
keywordstyle={\color{codeteal}},
xleftmargin=1.5cm,
xrightmargin=1em,
columns=fullflexible,
keepspaces=true,
stepnumber=1,
numbers=none,
captionpos=b,
showspaces=false,
frame=none
}
\theoremstyle{plain}
\newtheorem{theorem}{Theorem}[section] 
\newtheorem{proposition}[theorem]{Proposition}
\newtheorem{lemma}[theorem]{Lemma}
\theoremstyle{definition}
\theoremstyle{plain}
\newtheorem{example}[theorem]{Example}
\newcommand{\RR}{\mathbb{R}}
\newcommand{\CC}{\mathbb{C}}
\begin{document}

\title{Caustics by Refraction of Circles and Lines}

\author[Felix Rydell]{Felix Rydell}


\subjclass[2020]{14H50, 51M05}
\keywords{Caustics, Evolutes, Real Algebraic Plane Curve}

\maketitle

\begin{center}
\small
Swedish Defence Research Agency, Kista, Stockholm, Sweden\\
\texttt{felixrydell@gmail.com}\\
\url{https://orcid.org/0000-0003-0300-8115}
\end{center}

\begin{abstract}
    This short note revisits the classical result that the complete caustic by refraction of a circle is the evolute of Cartesian ovals. We provide additional details to the statement and geometric proof of this fact, as presented in G. Salmon's 1879 book `Higher Plane Curves'. We observe that as the circle tends to a line, the Cartesian ovals collapse into an ellipse or a branch of a hyperbola. Further, we derive a general formula for caustics by refraction of circles using a computer algebra system, providing a modern computational perspective on this classical problem.
\end{abstract}

\section*{Introduction} The \textit{envelope} of a family of curves in the plane is a curve that is tangent to each curve of the family at some point, such that these points of tangency form the whole curve. As Bruce and Giblin write, ``These [curves] appear to cluster along another curve, which the eye immediately picks out $\ldots$ The new curve is called the envelope'' \cite[Chapter 5]{bruce1992curves}. This effect is particularly noticeable in \Cref{fig: Evolutes-Conic Sections}. Important examples of envelopes are evolutes. The \textit{evolute} of a plane curve is the envelope of the family of normal lines, called \textit{normals}, and in differential geometry it is the locus of centres of curvature \cite[page 89]{bruce1992curves}. \Cref{fig: Evolutes} shows this envelope for an ellipse and other plane curves. The study of evolutes has a long standing history in mathematics. From 200 BC by Apollonius \cite{toomer2012apollonius} to Huygens \cite{huygens1986christiaan} and Salmon \cite{salmon1873treatise} in the 19th century. Recently, Piene, Riener and Shapiro continued this tradition, by studying various numerical invariants of evolutes in the plane \cite{piene2021return}. Evolutes of algebraic curves coincide with the classical Euclidean distance discriminant, the set of data points for which the number of distinct smooth complex critical points for the nearest point problem differs from the Euclidean distance degree \cite[Section 7]{draisma2016euclidean}.  

\begin{figure}
    \centering
     \includegraphics[width = 0.825\textwidth]{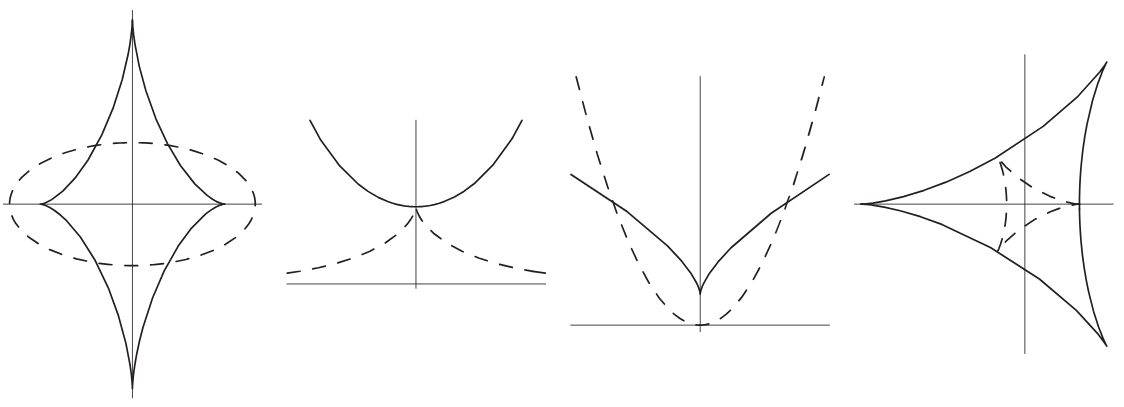}
    \caption{Plane curves (dashed) and their evolutes (solid). The image is taken from Mathworld--A Wolfram Web Resource   \cite{Evolute}.}
    \label{fig: Evolutes}
\end{figure}

In optics, a \textit{caustic} in the plane is an envelope of light rays which have been refracted by a curve $\mathcal C\subseteq \RR^2$. From the physics point of view, it is a curve of concentrated light. Caustics gained interest in the 17th century by Tschirnhaus, La Hire and Leibniz \cite{scarpello2005work}. In this paper, light rays are assumed to emanate from a fixed point $A$ in the affine plane or from a point at infinity (such as the sun), called the \textit{radiant} point. In the first case, we define for a smooth point $X\in \mathcal C$ the light ray $L(X)$ as the span of $A$ and $X$. In the second case, we define the light ray $L(X):=\{X+Y:Y\in L_A\}$, where $L_A$ is the line through the origin that meets $A$ at infinity. Any two lines that meet at a point $X$ make two angles. We say that the angle of a line $L_1$ with respect to a line $L_0$ meeting at $X$ is the unique angle $\theta\in (-\pi/2,\pi/2]$ that $L_0$ needs to be rotated around $X$ to become $L_1$. Denote the angle of $L(X)$ with respect to the normal line $N(X)$ at $X$ by $\sphericalangle_{\mathrm{in}}$. Fix a nonzero \textit{refraction constant} $n\in \RR$. If $\frac{1}{n}\sin \sphericalangle_{\mathrm{in}}\in [-1,1]$, then the \textit{refracted ray} $R_n(X)$ is the unique line whose angle $\sphericalangle_{\mathrm{out}}$ with respect to $N(X)$ at $X$ satisfies 
\begin{align}\label{eq: sin/sin}
    \frac{\sin \sphericalangle_{\mathrm{in}}}{\sin \sphericalangle_{\mathrm{out}}}=n.
\end{align}
In the case that $\sphericalangle_{\mathrm{in}}=0$, we set $\sphericalangle_{\mathrm{out}}=0$. The \textit{caustic by refraction} given $A,\mathcal C,$ and $n$ is the envelope of all refracted rays $R_n(X)$. The \textit{complete} caustic by refraction is the envelope of all refracted rays $R_{n}(X)$ and $R_{-n}(X)$. As we will see, from the algebraic point of view it is perhaps more natural to study complete caustics by refraction. We refer to Robert Ferréol's ``Encyclopedia of Remarkable Mathematical Shapes'' for a friendly survey on caustics. It is available on the web at \url{https://mathcurve.com/courbes2d.gb/caustic/caustic.htm}.

The aim of this paper is to survey the classical result and its proof that complete caustics by refraction of circles are evolutes of Cartesian ovals, as presented in Salmon's book \cite[pages 99-101]{salmon1873treatise}. We illustrate this fact in \Cref{fig: CausticFig}. This figure, as well as \Cref{fig: Evolutes-Conic Sections,fig: CausticFig line}, were created using \texttt{Mathematica} \cite{Mathematica}. In \Cref{s: Prelim}, we rewrite \eqref{eq: sin/sin} as an algebraic expression, discuss Cartesian ovals in some detail, and recall classical trigonometry. In \Cref{s: Circles}, we revisit Salmon's result in \Cref{thm: Cartesian oval}. We add numerous details to the statement and proof. We note that, starting with a Cartesian oval, it is possible to deduce a radiant point, a circle, and a refraction constant whose complete caustic is the evolute of the closure of that Cartesian oval. This is the content of \Cref{prop: reverse}. In \Cref{s: Line}, we show how these Cartesian ovals collapse into ellipses or branches of hyperbola as the circle we refract from degenerates into a line. Finally, we discuss computational approaches to determining complete caustics in \Cref{s: Comp}, and with the help of \texttt{Macaulay2} \cite{M2}, we find a general formula. We end by computing examples of complete caustics by refraction of a parabola, an ellipse and a hyperbola. 

\begin{figure}
    \centering
     \includegraphics[width = 0.55\textwidth, angle = 0]{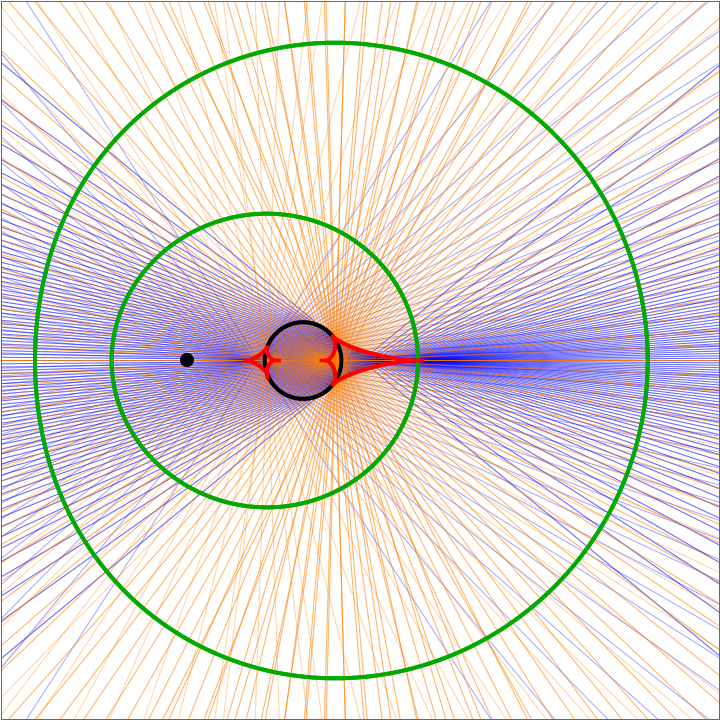}
    \caption{This illustration shows a black radiant point $A=(0,0)$ and a black circle $C$ of center $(1,0)$ and radius $r=1/3$. The blue lines are refracted rays given the refraction constant $n=1/2$ and the orange lines are refracted rays given $n=-1/2$. The red curve is the complete caustic by refraction. It is the evolute of the green Cartesian ovals, as described in \Cref{ex: main}. Some of the blue rays in this image are normals to the outer oval, and some are normals to the inner oval. Likewise for the orange rays. } 
    \label{fig: CausticFig}
\end{figure}

\bigskip

\paragraph{\textbf{Acknowledgements.}} This paper is dedicated to Vahid Shahverdi, a great friend and mathematician, who provided invaluable feedback for this project. The author would also like to thank Kathlén Kohn for helpful discussions and for introducing me to this subject. The author was supported by the Knut and Alice Wallenberg Foundation within their WASP (Wallenberg AI, Autonomous
Systems and Software Program) AI/Math initiative. 


\section{Preliminaries}\label{s: Prelim} \setcounter{equation}{0}
\numberwithin{equation}{section} In order to algebraically describe refracted rays, assume we are given a real radiant point $A\in \RR^2$, a real curve $\mathcal C\subseteq \RR^2$ that is the zero locus of the polynomial $G$, and a real refraction constant $n$. Let $U,V\in \RR^2$ be nonzero vectors. If $\theta\in (-\pi,\pi]$ is the angle that $U$ needs to be rotated to become parallel to $V$, then $U\cdot \iota(V)=\sin \theta \|U\|\|V\|$, where $\iota:V=(v_1,v_2)\mapsto (-v_2,v_1)$ rotates $V$ by $\pi/2$ radians, $U\cdot V$ is the standard inner product, and $\|U\|$ denotes the associated norm. For a smooth point $X\in \mathcal C$ away from $A$, recall the definition of $\sphericalangle_\mathrm{in}$ and $\sphericalangle_\mathrm{out}$ from the introduction. To determine the sines of these angles, we can use the above formula, although we have to adjust it slightly as we are dealing with lines and not just vectors. Let $\ell$ be the direction of the refracted ray, meaning that $R_n(X)$ is parametrized by $X+\ell a,a\in \RR$. Define $\sigma_X^{\mathrm{in}}\in \{-1,1\}$ as the sign of $(A-X)\cdot \nabla G(X)$ and $\sigma_X^{\mathrm{out}}\in \{-1,1\}$ as the sign of $\ell\cdot \nabla G(X)$. We then have 
\begin{align}
\sin\sphericalangle_{\mathrm{in}}=\sigma_X^{\mathrm{in}}\frac{(A-X)\cdot \iota\big(\nabla G(X)\big)}{\|A-X\|\|\nabla G(X)\|}\quad \textnormal{and}\quad \sin\sphericalangle_{\mathrm{out}}=\sigma_X^{\mathrm{out}}\frac{\ell\cdot \iota\big(\nabla G(X)\big)}{\|\ell\|\|\nabla G(X)\|}.
\end{align}
The assumed identity $\sin \sphericalangle_{\mathrm{in}}/\sin \sphericalangle_{\mathrm{out}}=n$ now yields the following algebraic expression
\begin{align}\label{eq: real point}
   \Big(\frac{\|\ell\|}{\|A-X\|}\Big)^2 \Big(\frac{(A-X)\cdot \iota(\nabla G(X))}{\ell\cdot\iota( \nabla G(X))}\Big)^2=n^2,
\end{align}
which does not care about the sign of $n$. In $\ell$, this is a homogeneous equation that is quadratic after clearing denominators. Then, for a fixed $X$, we expect two complex solutions in $\ell$ projectively. They correspond to the refraction constants $n$ and $-n$. From the algebraic perspective, we may take \eqref{eq: real point} as a definition for the refracted rays at $X$.

If $A$ is instead a point at infinity, then we simply replace $A-X$ in \eqref{eq: real point} by a fixed direction of the line $L_A$ from the introduction.



\subsection{Cartesian ovals}\label{ss: Cart o} Given two distinct \textit{foci} $A,B\in \RR^2$, and two real numbers $s$ and $t$, the set of real points $M$ satisfying
\begin{align}\label{eq: Cart st}
    \|A-M\|+s\|B-M\|=t
\end{align}
is a \textit{Cartesian oval}. It is contained in the algebraic degree-$4$ plane curve 
\begin{align}\label{eq: Cart alg st}
    \Big(\|A-M\|^2-s^2\|B-M\|^2+t^2\Big)^2-4t^2\|A-M\|^2=0
\end{align}
which we get by multiplying together the four expressions
\begin{align}\label{eq: four ovals}
   \|A-M\|+ \sigma_1 s\|B-M\|-\sigma_2 t,
\end{align}
where $\sigma_1,\sigma_2\in \{-1,1\}$. The history of the study of Cartesian ovals is outlined in \cite[Section 1.2]{breiding2024metric}. Generally, among the four ovals defined by \eqref{eq: four ovals}, two are real curves, although this is not always the case. In fact, if $s=1$ and $t>\|A-B\|$, then only one oval has real points and that oval is an ellipse. In general for $s=1$, \eqref{eq: Cart alg st} becomes quadratic, as we are left with
\begin{align}
    \Big(\|A\|^2-\|B\|^2+2M\cdot(B-A)+t^2\Big)^2-4t^2\|A-M\|^2=0.
\end{align}

To study the different types of Cartesian ovals of \eqref{eq: four ovals} for nonzero $s$ and $t$, assume without restriction that $s,t>0$. The following pairs of signs $(\sigma_1,\sigma_2)$ correspond to real curves.
\begin{enumerate}
\item For $s>1$:
       \begin{itemize}
        \item  If $t>\|A-B\|$:
        \begin{align}
            (\sigma_1,\sigma_2)=(1,1) \quad\textnormal{and}\quad (\sigma_1,\sigma_2)=(-1,-1).
        \end{align}
           \item  If $t<\|A-B\|$:
        \begin{align}
            (\sigma_1,\sigma_2)=(-1,1) \quad\textnormal{and}\quad (\sigma_1,\sigma_2)=(-1,-1).
        \end{align}
    \end{itemize}
            \item For $s=1$: 
        \begin{itemize}
        \item  If $t>\|A-B\|$:
        \begin{align}
            (\sigma_1,\sigma_2)=(1,1).
        \end{align}
           \item  If $t<\|A-B\|$:
        \begin{align}\label{eq: sig1sig2}
            (\sigma_1,\sigma_2)=(-1,1) \quad\textnormal{and}\quad (\sigma_1,\sigma_2)=(-1,-1).
        \end{align}
    \end{itemize}
    \item For $s<1$: 
        \begin{itemize}
        \item  If $t>s\|A-B\|$:
        \begin{align}
            (\sigma_1,\sigma_2)=(1,1) \quad\textnormal{and}\quad (\sigma_1,\sigma_2)=(-1,1).
        \end{align}
           \item  If $t<s\|A-B\|$:
        \begin{align}
            (\sigma_1,\sigma_2)=(-1,1) \quad\textnormal{and}\quad (\sigma_1,\sigma_2)=(-1,-1).
        \end{align}
    \end{itemize}
\end{enumerate}
In each case, the remaining choices of signs $(\sigma_1,\sigma_2)$ yield a Cartesian oval with no real points. The real curves represented by \eqref{eq: sig1sig2} are two branches of a hyperbola. All other Cartesian ovals in the list are bounded. To derive the list, we may consider the function 
\begin{align}
    f(M):= \|A-M\|+\sigma_1s\|B-M\|, \quad M\in \RR^2
\end{align}
and check which $\sigma_2 t$ lies in the interior of its image. For such $\sigma_2 t$, the preimage of $f$ is a real curve. As an example, given $\sigma_1=1$, the image of $f$ is the interval from the smallest number between $\|A-B\|$ and $s\|A-B\|$ to infinity. It is helpful to note that optima of $f$ lie on the line spanned by $A$ and $B$. For the special case $\sigma_1s=-1$, the function $f$ is bounded from both below and above as its range is $[-\|A-B\|,\|A-B\|]$.

If $s\neq 1$, the special cases $t=\|A-B\|$, respectively $t=s\|A-B\|$, yield the union of one real oval and one real point. When $\sigma_1s=1$ and $\sigma_2t= \|A-B\|$, the Cartesian oval is the line segment between $A$ and $B$.

We study the normals to Cartesian ovals below. First, given two distinct points $A$ and $B$, let $L_{AB}$ denote the line spanned by $A$ and $B$. Also, given three distinct points $A,B,$ and $C$, let $ABC$ denote the triangle they form, and $\sphericalangle_{BAC}\in [0,\pi]$ denote the inner angle of the triangle at $A$. 

\begin{lemma}\label{le: normals pre} Let $A$ and $B$ be distinct points, and let $s$ and $t$ be nonzero. The closure of the Cartesian oval
\begin{align}
\|A-M\|+s\|B-M\|=t
\end{align}
is smooth away from the line $L_{AB}$.
\end{lemma}

\begin{proof} For a point $M$ on the Cartesian oval to singular, the gradient of \eqref{eq: Cart alg st} must be zero. This condition gives us 
\begin{align}\label{eq: gamma}
  4\big((m_i-a_i)-s^2(m_i-b_i))\Gamma-8t^2(m_i-a_i) =0 \quad \textnormal{for}\quad  i=1,2,
\end{align}
where $\Gamma:= \|A-M\|^2-s^2\|B-M\|^2+t^2$. If $\Gamma=0$, then for the equations to hold, we must have $M=A$. If $\Gamma\neq 0$ and $(m_i-a_i)-s^2(m_i-b_i)=0$ for both $i$, then $m_i-a_i=0$ for \eqref{eq: gamma} to hold. Then, it must follow that $m_i-b_i=0$ for both $i$, which is a contradiction since it implies $M=A=B$. If $\Gamma\neq 0$ and $(m_i-a_i)-s^2(m_i-b_i)\neq 0$ for some $i$, then we can solve for $\Gamma$ and substitute it into the other equation. After simplification, we obtain
\begin{align}
    -(m_1-a_1)(m_2-b_2)=-(m_2-a_2)(m_1-b_1).
\end{align}
This identity is equivalent to the determinant of $2\times 2$ matrix $\begin{bmatrix}
    M-A & M-B
\end{bmatrix}$ being zero, which occurs if and only if $A,B,$ and $M$ are collinear. 
\end{proof}

\begin{lemma}\label{le: normals} Let $A$ and $B$ be distinct points, and let $s$ and $t$ be nonzero. Let $M$ be a fixed point on the Cartesian oval
\begin{align}\label{eq: Cart s}
\|A-M\|+\sigma_1s\|B-M\|=\sigma_2t
\end{align}
away from $L_{AB}$. \textnormal{(1)} The normal line $N(M)$ does not meet $A$ or $B$. \textnormal{(2)} Denote by $\alpha\in [0,\pi]$ an angle between $L_{AM}$ and $N(M)$ at $M$, and denote by $\beta\in [0,\pi]$ an angle between $L_{BM}$ and $N(M)$ at $M$. Then, \begin{align}\label{eq: abs}
    \frac{\sin \alpha}{\sin \beta}=s.
\end{align}
\textnormal{(3)} Apart from $N(M)$, there is exactly one more line $K$ through $M$ satisfying \eqref{eq: abs}. Exactly one line out of $N(M)$ and $K$ meets $L_{AB}$ inside the line segment between $A$ and $B$, and this line is $N(M)$ if and only if $\sigma_1=1$.    
\end{lemma}

\begin{proof} By \Cref{le: normals pre}, the point $M$ is smooth in the closure of the Cartesian oval. We show (1) at the end of the proof. We first prove (2). Let $G(M):=\|A-M\|+\sigma_1s\|B-M\|-\sigma_2t$. Then,
\begin{align}\label{eq: id sin}
    \sin \alpha=\frac{\big| (A-M)\cdot\iota\big(\nabla G(M)\big)\big|}{ \|A-M\|\|\nabla G(M)\|}\quad \textnormal{and}\quad
   \sin \beta =\frac{\big|(B-M)\cdot \iota\big(\nabla G(M)\big)\big|}{ \|B-M\|\|\nabla G(M)\|}.
\end{align}
Using that
\begin{align}\label{eq: nabla G}
   \nabla G(M)^\top=\begin{bmatrix}
     (m_1-a_1)/\|A-M\|+\sigma_1s(m_1-b_1)/\|B-M\|\\
         (m_2-a_2)/\|A-M\|+\sigma_1s(m_2-b_2)/\|B-M\|
    \end{bmatrix},
\end{align}
we get the desired equality \eqref{eq: abs} from \eqref{eq: id sin} and the fact that $M$ is away from $L_{AB}$. 

For (3), we study the two functions
\begin{align}\label{eq: frac}
    \frac{\sin (\sphericalangle_{AMB}-\gamma)}{\sin \gamma} \textnormal{ for }\gamma \in (0,\sphericalangle_{AMB}) \quad \textnormal{and}\quad  \frac{\sin (\gamma -\sphericalangle_{AMB})}{\sin \gamma}\textnormal{ for }\gamma \in (\sphericalangle_{AMB},\pi).
\end{align}
Differentiating with respect to $\gamma$, we see that both functions are injective. They are also surjective onto the positive real numbers. There are then two solutions in $\gamma$ yielding the ratio $s$, one in each interval of \eqref{eq: frac}, and one of them must correspond to the normal $N(M)$. 

We may assume by translation, rotation, and scaling that 
$A=(0,0)$ and $B=(1,0)$ without restriction. Denote by $I=\{(x_1,0):x_1\in (0,1)\}$ the interval between $A$ and $B$. For $M=(m_1,m_2)$ as in the statement, we now wish to find $\lambda$ such that $M+\lambda \nabla G(M)$ lies in $L_{AB}$, i.e., its last coordinate is $0$. Since we assumed $m_2\neq 0$, we get
\begin{align}
    \lambda = -\frac{1}{\frac{1}{\|A-M\|}+\sigma_1s\frac{1}{\|B-M\|}}.
\end{align}
Note that in the case that $\lambda$ is undefined with a denominator equal to $0$, then $\sigma_1 =-1$ and the normal does not intersect $I$. With $\lambda$ as above, the first coordinate of $M+\lambda \nabla G(M)$ is
\begin{align}
    m_1-\frac{1}{\frac{1}{\|A-M\|}+\sigma_1s\frac{1}{\|B-M\|}}\big( \frac{m_1}{\|A-M\|}+\sigma_1s\frac{m_1-1}{\|B-M\|}\big), 
\end{align}
which equals
\begin{align}\label{eq: sam}
    \frac{\sigma_1s\|A-M\|}{\|B-M\|+\sigma_1s\|A-M\|}.
\end{align}
The normal intersects $I$ if and only if \eqref{eq: sam} lies in the interval $(0,1)$. If $\sigma_1 =1$, then \eqref{eq: sam} lies in $(0,1)$ as $\|B-M\|+s\|A-M\|$ is always bigger than $s\|A-M\|$ and $\|B-M\|$. If $\sigma_1=-1$, then the values of \eqref{eq: sam} lie outside $[0,1]$ as is the case for the function $g(h):=-1/(h-1)=1/(1-h)$ defined over $h> 0$. 

Above, the normal $N(M)$ does not meet $(0,0)$ or $(1,0)$. This suffices to prove (1).
\end{proof}

\subsection{Classical trigonometry} Every closed arc in a circle corresponds to an angle $\theta\in [0,2\pi]$, the degree by which one end point is rotated around the center to become the other end point. The Inscribed Angle Theorem from Book 3 of Euclid's Elements says that if $A,B,$ and $C$ are distinct points inscribed in a circle, then the angle $\sphericalangle_{BAC}$ is half the angle of the closed arc defined by the endpoints $B$ and $C$ that does not contain $A$. As a direct consequence, the angle $\sphericalangle_{BXC}$ is constant for any $X$ in the same closed arc as $A$. Let $E$ be any point on the tangent of the circle at $C$ in the halfplane defined by $L_{BC}$ that is away from $A$. Then, by continuity, we must have $\sphericalangle_{BAC}=\sphericalangle_{BCE}$.

\begin{lemma}\label{cor: insc 2} Let $A,B,$ and $C$ be distinct point inscribed in a circle. If $E$ is the intersection of $L_{AB}$ and the tangent of the circle at $C$, then $ACE$ is similar to $BCE$, and
\begin{align}
    \frac{\|A-E\|}{\|A-C\|}=\frac{\|C-E\|}{\|C-B\|}.
\end{align}
\end{lemma}

A sketch of proof goes as follows. Both triangles $ACE$ and $BCE$ have by construction at least one angle in common, namely $\sphericalangle_{AEC}=\sphericalangle_{BEC}$. To see that they are similar it suffices to show that they have one more angle in common. This can be done by relating the inscribed angles to corresponding closed arcs and applying the Inscribed Angle Theorem. 

Another classical result from ancient Greece is Ptolemy's theorem. Let $A,B,C,$ and $D$ be vertices of a quadrilateral inscribed in a circle. Then,
\begin{align}\label{eq: Pto}
   \|B-C\| \|A-D\|+\sigma_1\|A-C\|\|B-D\|=\sigma_2\|A-B\|\|C-D\|,
\end{align}
with $(\sigma_1,\sigma_2)=(1,1)$ if $A$ is opposite to $B$ (and $C$ is opposite to $D$), $(\sigma_1,\sigma_2)=(-1,-1)$ if $A$ is opposite to $C$ (and $B$ is opposite to $D$), and $(\sigma_1,\sigma_2)=(-1,1)$ if $A$ is opposite to $D$ (and $B$ is opposite to $C$). Moreover, the converse is true. By this we mean that if $A,B,C,$ and $D$ satisfy \eqref{eq: Pto}, then these four points are inscribed in a circle. Finally, we recall Law of Sines, which dates back at least a thousand years. Let $A,B,$ and $C$ be distinct inscribed points on a circle. Then,
\begin{align}
    \frac{\|B-C\|}{\sphericalangle_{BAC}}=\frac{\|A-C\|}{\sphericalangle_{ABC}}=\frac{\|A-B\|}{\sphericalangle_{BCA}}=2r,
\end{align}
where $r$ is the radius of the circle. 


\section{Caustics by Refraction of Circles}\label{s: Circles} We are now ready to revisit the classical proof appearing in Salmon's book \cite[pages 99-101]{salmon1873treatise} that the complete caustic by refraction of circles are evolutes of Cartesian ovals, with details added. We first need a lemma that constitutes the basis for the proof of our main theorem.

\begin{lemma}\label{le: AOB} Let $\mathcal C$ be a circle with center $O$ and radius $r>0$. Assume that the point $A$ is away from $\mathcal C$ and $O$. For each point $R$ on $\mathcal C$, away from $L_{AO}$, define $\mathcal C(R)$ to be the unique circle through $A$ and $R$ that is tangent to $L_{RO}$ at $R$. Then, $\mathcal C(R)$ meets $L_{AO}$ at the point 
\begin{align}\label{eq: B def} B:=O+\frac{r^2}{\|A-O\|^2}(A-O),
\end{align}
independently of $R$. Further, $\|A-O\|\|B-O\|=r^2$.
\end{lemma}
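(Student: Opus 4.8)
The plan is to reduce the whole statement to the tangent--secant (power of a point) relation, which in this paper is packaged in \Cref{cor: insc 2}. The key observation is that the center $O$ plays a double role with respect to the auxiliary circle $\Ca(R)$: on one hand $O$ lies on the line $L_{RO}$, which by definition is tangent to $\Ca(R)$ at $R$; on the other hand $O$ lies on the line $L_{AO}$, which is a secant of $\Ca(R)$ meeting it at $A$ and at a second point, which I will call $B$. Because $R$ is taken away from $L_{AO}$, the three points $A,B,R$ are distinct and non-collinear on $\Ca(R)$, so the configuration is non-degenerate and the classical lemmas apply. The existence and uniqueness of $\Ca(R)$ are already granted by the statement, so I would take them for free.

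First I would apply \Cref{cor: insc 2} to the circle $\Ca(R)$ with inscribed points $A,B,R$ (the point $R$ in the role of $C$ there) and with $E=O$, the intersection of the chord-line $L_{AB}=L_{AO}$ with the tangent $L_{RO}$ to $\Ca(R)$ at $R$. The pair of similar triangles furnished by that corollary yields, among its proportional sides, the tangent--secant relation $\|R-O\|^{2}=\|A-O\|\,\|B-O\|$. Since $R$ lies on $C$ we have $\|R-O\|=r$, and this already establishes the final assertion $\|A-O\|\,\|B-O\|=r^{2}$.

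It remains to pin down $B$ as a point rather than merely its distance from $O$. Here I would use that $O$ lies on a line tangent to $\Ca(R)$ at $R$ with $O\neq R$ (as $\|O-R\|=r>0$), so $O$ is strictly exterior to $\Ca(R)$; consequently the two intersection points $A$ and $B$ of the secant $L_{AO}$ with $\Ca(R)$ lie on the same side of $O$, i.e.\ $O$ is not between them. Thus $B-O$ is a \emph{positive} scalar multiple of $A-O$, and combining this with $\|B-O\|=r^{2}/\|A-O\|$ forces
\begin{align}
 B-O=\frac{r^{2}}{\|A-O\|^{2}}\,(A-O).
\end{align}
A direct rearrangement, writing $B=A+(B-A)=A+\bigl(1-\tfrac{r^{2}}{\|A-O\|^{2}}\bigr)(O-A)$, identifies this with the point $B$ of \eqref{eq: B def}; equivalently, $B$ is the image of $A$ under inversion in the circle $C$. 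Since this description involves only $A$, $O$ and $r$ and never the point $R$, the point $B$ is independent of $R$, which is the first assertion.

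The main obstacle I anticipate is orientation bookkeeping rather than any real difficulty: reading off the correct ratio from \Cref{cor: insc 2} so that $\|R-O\|^{2}$ (and not a ratio of chords) appears on the right side, and verifying that $A$ and $B$ genuinely lie on the \emph{same} side of $O$, so that the stated formula for $B$ holds and not its reflection through $O$. Once the tangent--secant relation is correctly oriented, the reduction to \eqref{eq: B def} is a routine computation.
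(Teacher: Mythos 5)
Your proof is correct, but it takes a genuinely different route from the paper. The paper's proof is a direct computation: it introduces the center $E$ of $\Ca(R)$, records the defining identities (radius, tangency $(R-O)\cdot(R-E)=0$), and uses the law-of-cosines identity \eqref{eq: id id} to verify that the explicitly given point $B$ of \eqref{eq: B def} satisfies $\|B-E\|=\gamma$, with $\|A-O\|\|B-O\|=r^2$ falling out along the way. You instead argue synthetically: the tangent--secant (power of a point) relation for $O$ with respect to $\Ca(R)$, extracted from the similarity in \Cref{cor: insc 2} applied with $E=O$ and $C=R$, gives $\|A-O\|\|B-O\|=\|R-O\|^2=r^2$ for the second intersection $B$ of $L_{AO}$ with $\Ca(R)$; then, since $O$ lies on a tangent of $\Ca(R)$ away from the tangency point, it is exterior to $\Ca(R)$, so $A$ and $B$ lie on the same side of $O$ and $B$ is pinned down as the inverse of $A$ in the circle $C$, visibly independent of $R$. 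Your approach explains where formula \eqref{eq: B def} comes from (inversion in $C$) rather than merely verifying it, avoids introducing the center $E$, and reuses \Cref{cor: insc 2}, which the paper deploys in the main theorem anyway; the paper's computation is more self-contained and requires no positional case analysis. One small point to make explicit: the distinctness $B\neq A$ (equivalently, that $L_{AO}$ is not tangent to $\Ca(R)$ at $A$) does not follow from $R\notin L_{AO}$ alone, as you suggest, but from the hypothesis that $A$ is away from $C$ --- tangency at $A$ would force $\|A-O\|=\|R-O\|=r$ by equality of tangent lengths from $O$. Also note that \Cref{cor: insc 2} displays only the ratio $\|A-E\|/\|A-C\|=\|C-E\|/\|C-B\|$; the relation you need, $\|C-E\|^2=\|A-E\|\|B-E\|$, comes from the asserted similarity itself, so you should cite the similarity rather than the displayed proportion.
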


The identity \eqref{eq: B def} is equivalent to
\begin{align}\label{eq: A B}
    A=O+\frac{r^2}{\|B-O\|^2}(B-O).
\end{align}
For the proof of \Cref{le: AOB}, we use that for three points $U,V,$ and $W$, we have
\begin{align}\begin{aligned}\label{eq: id id}
    \|V-W\|^2=& \|(V-U)-(W-U)\|^2\\  
    &\|V-U\|^2+\|W-U\|^2-2(V-U)\cdot (W-U).
\end{aligned}
\end{align}

\begin{proof} Let $E$ be the center of the circle $\mathcal C(R)$, and assume that $\mathcal C(R)$ is of radius $\gamma$. We show that the distance between $B$ and $E$ is exactly $\gamma$. By assumption, we have the following identities:
\begin{align}
\|R-O\|^2=r^2,\label{eq: r o}\\
    \|R-E\|^2=\gamma^2,\label{eq: r alph}\\
    \|A-E\|^2=\gamma^2,\label{eq: a alph}\\
    (R-O)\cdot (R-E)=0.\label{eq: norm}
\end{align}
We get by \eqref{eq: r o}, \eqref{eq: r alph}, and \eqref{eq: norm} that 
\begin{align}
    \|E-O\|^2=\|(R-O)-(R-E)\|^2=\gamma^2+r^2.
\end{align}
Next, for $B$ as defined in \eqref{eq: B def}, we have $\|B-O\|^2=r^4/\|A-O\|^2$ and $\|A-O\|\|B-O\|=r^2$. Moreover,\begin{align}\begin{aligned}
    2(B-O)\cdot (E-O)&=   \frac{r^2}{\|A-O\|^2} 2(A-O)\cdot (E-O)\\
    &=\frac{r^2}{\|A-O\|^2}(\|A-O\|^2+\|E-O\|^2-\|A-E\|^2)\\
    &= r^2+\frac{r^4}{\|A-O\|^2} .
\end{aligned}
\end{align}
Putting everything together, we obtain \begin{align}\label{eq: B-E}
    \|B-E\|^2&=\|B-O\|^2+\|E-O\|^2-2(B-O)\cdot(E-O)=\gamma^2,
\end{align}
which finishes the proof.
\end{proof}

\begin{theorem}\label{thm: Cartesian oval} Let $\mathcal C$ be a circle with center $O$ and radius $r>0$. Assume that the radiant point $A$ is away from $\mathcal C$ and $O$, and define $B$ as in \eqref{eq: B def}. Fix a real number $n\neq 0$. The set of refracted rays for the refraction constants $n$ and $-n$ that do not meet $A$ or $B$ coincides with the set of normals, excluding $L_{AO}$, to the real curves among the Cartesian ovals 
\begin{align}\label{eq: Cart}
  \|A-M\|+\sigma_1\frac{\|A-O\|}{r}\|B-M\|=\sigma_2\frac{\|A-O\|\|A-B\|}{r|n|},    \end{align}
  where $\sigma_1,\sigma_2\in \{-1,1\}$. In other words, the complete caustic is the evolute of the union of those real Cartesian ovals.
\end{theorem}

To determine which Cartesian ovals in \eqref{eq: Cart} are real curves, we refer to \Cref{ss: Cart o}. Observe that it is possible to consider radiant points on the circle $\mathcal C$ and get a complete caustic curve, however, the statement does not extend to that case.

  Since the proof below is very geometric in nature, we include the original illustration by Salmon in \Cref{fig: Salmon} to assist the reader. 

    \begin{figure}
        \centering
      \includegraphics[height=0.39\columnwidth]{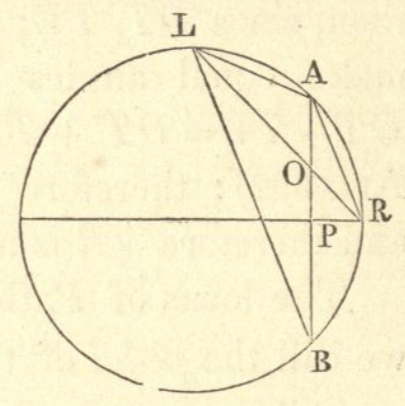}
        \includegraphics[height=0.39\columnwidth]{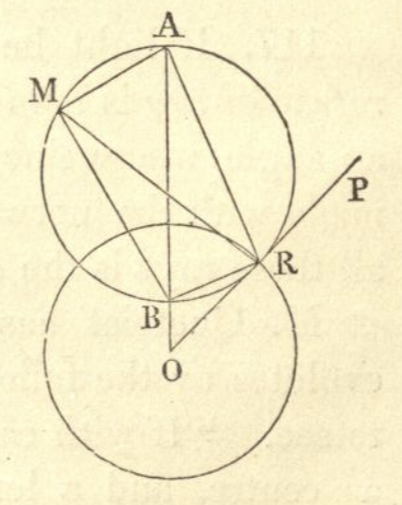}
    
        \caption{On the left: Salmon's illustration for the proof that the complete caustic by refraction of lines are the evolutes of ellipses. On the right: Salmon's illustration for the proof that the complete caustic by refraction of lines are the evolutes of Cartesian ovals. The points $A,B,R,O,$ and $M$ in the right figure have the same meaning in this illustration as in our proof of \Cref{thm: Cartesian oval}. These images are taken from \cite[page 100]{salmon1873treatise}.}
        \label{fig: Salmon}
    \end{figure}

\begin{proof} We start by taking a refracted ray that does not meet $A$ or $B$ and show that it is a normal to one of those Cartesian ovals. Take a point $R$ on the circle $\mathcal C$ away from $L_{AO}$ and consider the unique circle $\mathcal C(R)$ as in \Cref{le: AOB}. For $\sin \sphericalangle_{\mathrm{in}}= n\sin \sphericalangle_{\mathrm{out}}$ to hold, we must have that $\frac{1}{n}\sin \sphericalangle_{\mathrm{in}}\in[-1,1]$, which is the case at least for $R$ close enough to $L_{AO}$. A refracted ray meets the circle $\mathcal C(R)$ in two distinct points, $R$ itself and another point $M$. To see that they are indeed distinct, observe that otherwise the refracted ray would have to equal $L_{OR}$. This is only possible if $\sin \sphericalangle_{\mathrm{out}}= \sin \sphericalangle_{\mathrm{in}}=0$, but since $R$ is away from $L_{AO}$, this is not the case. Assume that the refracted ray $L_{RM}$ does not meet $A$ or $B$. We get a quadrilateral in $\mathcal C(R)$ given by the four distinct points $A,B,R,$ and $M$. Applying Ptolemy's theorem, we have
\begin{align}\label{eq: ptol}
 \|A-M\|+\sigma_1\frac{\|R-A\|}{ \|R-B\|}\|B-M\|=\sigma_2 \frac{\|R-M\|}{\|R-B\|}\|A-B\|,
\end{align}
with $(\sigma_1,\sigma_2)=(1,1)$ if $A$ is opposite to $B$ (and $R$ is opposite to $M$), $(\sigma_1,\sigma_2)=(-1,1)$ if $A$ is opposite to $M$ (and $B$ is opposite to $R$), and $(\sigma_1,\sigma_2)=(-1,-1)$ if $A$ is opposite to $R$ (and $B$ is opposite to $M$). By \Cref{cor: insc 2}, the triangles $AOR$ and $BOR$ are similar, and
\begin{align}\label{eq: sim}
    \frac{\|A-O\|}{\|A-R\|}=\frac{\|R-O\|}{\|R-B\|},
\end{align}
meaning that $\|R-A\|/\|R-B\|$ is fixed and equal to $\|A-O\|/r$. By the Law of Sines,
\begin{align}\label{eq: law of sines}
   \frac{\|R-A\|}{\sin \sphericalangle_{ABR}}=\frac{\|R-M\|}{\sin \sphericalangle_{MBR}}=\textnormal{ twice the radius of }\mathcal C(R).
\end{align}
From the Inscribed Angle Theorem it follows that $|\sin \sphericalangle_{\mathrm{in}}|=\sin \sphericalangle_{ABR}$ and $|\sin \sphericalangle_{\mathrm{out}}|=\sin \sphericalangle_{MBR}$. We conclude that $\|R-A\|/\|R-M\|$ equals $|n|$. By \eqref{eq: sim} and \eqref{eq: law of sines}, we get
\begin{align}\label{eq: AOcr}
    \frac{\|R-M\|}{\|R-B\|}=\frac{\|R-A\|}{\|R-B\|}\frac{\|R-M\|}{\|R-A\|}=\frac{\|A-O\|}{r|n|}.
\end{align}
Combining \eqref{eq: ptol},\eqref{eq: sim} and \eqref{eq: AOcr}, we get equation \eqref{eq: Cart}. We want to use \Cref{le: normals} to conclude that the refracted ray $L_{RM}$ is a normal to the Cartesian oval \eqref{eq: ptol}. By the Law of Sines and \eqref{eq: sim},
\begin{align}\label{eq: normal Cart}
    \frac{\sin \sphericalangle_{AMR}}{\sin\sphericalangle_{BMR}}=\frac{\|R-A\|}{\|R-B\|}=\|A-O\|/r.
\end{align}
Now, it suffices to note that $L_{RM}$ meets the line segment between $A$ and $B$ if and only if $A$ is opposite to $B$ in the quadrilateral, which occurs precisely when $\sigma_1=1$ as required in \Cref{le: normals}. 

Finally, we take with a point $M$ located away from $L_{AO}$ on one of the real curves among the Cartesian ovals in the statement. Let $G_1$ be the equation of that Cartesian oval, of the form \eqref{eq: Cart}. We show that $N(M)$ is a refracted ray and that it does not meet $A$ or $B$. The latter holds due to \Cref{le: normals}. Denote by $\mathcal C(M)$ the unique circle containing $A,B,$ and $M$. Since one of $A$ and $B$ is strictly inside $\mathcal C$ and the other is strictly outside, $\mathcal C(M)$ meets $\mathcal C$ at two points $R_1$ and $R_2$. As there is a unique circle containing $A,B,$ and one of $R_i$, and $\mathcal C(M)$ contains all these four points, we have by construction that $\mathcal C(R_1) =\mathcal C(R_2)=\mathcal C(M)$. We get two quadrilaterals defined by $A,B,M,$ and the fourth point $R_i$, for $i=1,2$. From these two quadrilaterals, we get two equations of the form \eqref{eq: ptol}. Exactly one of the quadrilaterals have that $R_i$ is opposite to $M$, as $R_1$ and $R_2$ are in different half-planes defined by $L_{AO}$. Therefore, the two equations have different signs $\sigma_1$. Let $R$ be the point among $R_1$ and $R_2$ whose corresponding equation has sign $\sigma_1$ equal to that of $G_1$, and write $G_2$ for this new equation involving $R$. By \eqref{eq: sim}, the left-hand sides of $G_1$ and $G_2$ are the same. Then, the right-hand sides must be the same and \eqref{eq: AOcr} must hold. We get that $\|R-A\|/\|R-M\|$ equals $|n|$ and by \eqref{eq: law of sines}, $\sin \sphericalangle_{ABR}/\sin  \sphericalangle_{MBR}$ is also $|n|$. Letting $L_{AR}$ be our light ray, we have $|\sin \sphericalangle_{\mathrm{in}}|=\sin \sphericalangle_{ABR}$ by the Insribed Angle Theorem. Let $\sphericalangle$ be the angle that $L_{RM}$ with respect to $L_{OR}$ at $R$. We similarly have $ |\sin \sphericalangle|=\sin  \sphericalangle_{MBR}$, and $|\sin \sphericalangle_{\mathrm{in}}|/|\sin \sphericalangle|=|n|$. This precisely means that $L_{RM}$ is a refracted ray for $n$ or $-n$. 
\end{proof}

\begin{example}\label{ex: main} Here, we explain \Cref{fig: CausticFig} in more detail. Set $A=(0,0),O=(1,0),r=1/2$ and $n=1/2$. Then, \eqref{eq: Cart} and \eqref{eq: Cart alg st} together yield the degree-4 equation
\begin{align}
    \Big(-72(y_1^2+y_2^2)+144y_1+192\Big)^2-9216(y_1^2+y_2^2)=0,
\end{align}
whose real part is the union of the two green ovals in \Cref{fig: CausticFig}. 
    \hfill$\diamondsuit\,$ 
\end{example}


As a consequence of \Cref{thm: Cartesian oval}, it is possible to start with a Cartesian oval and construct a radiant point and a circle such that the complete caustic curve is the evolute of the closure of that Cartesian oval. This is explored next.

\begin{proposition}\label{prop: reverse} Let $A$ and $B$ be distinct points. For $s,t>0$ with $s\neq 1$, consider the Cartesian ovals \begin{align}\label{eq: ex oval}\|A-M\|+\sigma_1s\|B-M\|=\sigma_2t,
\end{align}
where $\sigma_1,\sigma_2\in \{-1,1\}$. The evolute of the closure of the Cartesian ovals is the complete caustic by refraction given the radiant point $A$, the circle $\mathcal C$ of center 
\begin{align}
    O=A+\frac{s^2}{s^2-1}(B-A),
\end{align}
and radius $r=s\|A-B\|/|s^2-1|$, and the refraction constant $n=s\|A-B\|/t$.
\end{proposition}

\begin{proof} By \Cref{thm: Cartesian oval}, it suffices to find a circle $\mathcal C$ of center $O$ with radius $r>0$, and refraction constant $n>0$ such that
\begin{align}\begin{aligned}\label{eq: two eq}
   \frac{\|A-O\|}{r}=s\quad \textnormal{and}\quad   \frac{\|A-O\|\|A-B\|}{rn}=t.
\end{aligned}
\end{align}
From \eqref{eq: two eq}, we must have that $n=s\|A-B\|/t$. As in \Cref{le: AOB}, we assume $A,B,$ and $O$ are collinear; $O=A+q(B-A)$ for some $q\in \RR$. By \eqref{eq: two eq} and the definition of $B$, we have $B=O+(1/s^2)(A-O)$. Putting this together, we derive that $O-A=q(1-(1/s^2))(O-A)$, and we therefore set $q=s^2/(s^2-1)$. For this choice of $q$, $\|A-O\|/s$ equals $s\|A-B\|/|s^2-1|$, which we define $r$ as. As these choices of $O,r,$ and $n$ satisfy \eqref{eq: two eq}, we are done. 
\end{proof}


\section{Caustics by Refraction of Lines}\label{s: Line} Also found in \cite[pages 99-101]{salmon1873treatise} is a proof that the caustics by refraction of lines are normals to ellipses. We have illustrated this in \Cref{fig: CausticFig line}. We clarify here that this only occurs when $|n|<1$, otherwise the complete caustic is the evolute of a hyperbola. 

\begin{figure}
    \centering
     \includegraphics[width = 0.55\textwidth, angle = 0]{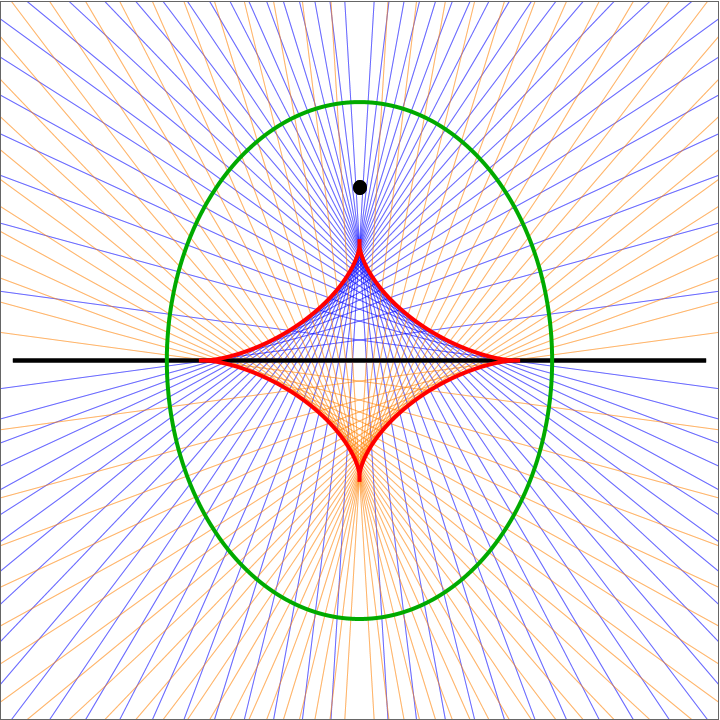}
    \caption{This plot shows refracted rays for the black radiant point and the black line. The blue lines are the refracted rays given the refraction constant $n=2/3$ and the orange lines are the refracted rays given $n=-2/3$. The red curve is the complete caustic by refraction. The green curve is the ellipse whose evolute is the complete caustic.}
    \label{fig: CausticFig line}
\end{figure}

\begin{theorem}\label{thm: Line} Let $L$ be a line. Assume that the radiant point $A$ is away from $L$, and let $B$ be the reflection of $A$ with respect to $L$. Fix a real number $n\neq 0$. If $|n|<1$, then the set of refracted rays for the refraction constants $n$ and $-n$ coincides with the set of normals to the ellipse
\begin{align}\label{eq: Cart line 1}
  \|A-M\|+\|B-M\|=\frac{\|A-B\|}{|n|}.    \end{align}
If $|n|>1$, then the set of refracted rays for the refraction constant $n$ coincides with the set of normals to the hyperbola branch
\begin{align}\label{eq: Cart line 2}
  \|A-M\|-\|B-M\|=\frac{\|A-B\|}{-n}.    \end{align}
\end{theorem}

The proof of \Cref{thm: Line} works exactly the same as for \Cref{thm: Cartesian oval}, where the circle $\mathcal C(R)$ is uniquely defined by containing $A$ and $B$, and being tangent to the normal line of $L$ at $R\in L$. To see why we get the distinct cases $|n|<1$ and $|n|>1$, fix some $R\in L$ and consider the circle $\mathcal C(R)$ containing $A,B,$ and $R$. It is clear that $A$ and $B$ are opposite in the corresponding quadrilateral precisely when $|n|<1$. That $A$ and $B$ are opposites further correspond to the signs $(\sigma_1,\sigma_2)=(1,1)$, which is reflected in \eqref{eq: Cart line 1}. 

In the following, we observe that \eqref{eq: Cart line 1} is a limit of \eqref{eq: Cart}, and note that this is also the case for \eqref{eq: Cart line 2}. We do this by fixing $A$ and a line $L$ away from $A$, and letting $B$ be the reflection of $A$ with respect to $L$. We choose a sequence of circles $\mathcal C_k$ of centers $O_k$ and radii $r_k$ that tend to the line $L$. By \eqref{eq: B def}, we get a sequence of points $B_k$. We show that $B_k$ tends to the reflection $B$ of $A$ with respect to $L$ and that \eqref{eq: Cart} tends to \eqref{eq: Cart line 1} as $k\to \infty$. The line $L$ divides $\RR^2$ into two half-planes, one that contains $A$ and one that does not. Inside the latter, let $O_k\in L_{AB}$ be any sequence of points that tends to infinity. Setting $r_k:=\|P-O_k\|$, the line $L$ can be viewed as the limit of circles $\mathcal C_k$ with centers $O_k$ and radii $r_k$. We now argue that $B_k\to B$. Up to translation, rotation and scaling, we may assume that $A=(0,0)$ and $L=\{X\in \RR^2:x_1=1\}$. Then, we have $O_k=(o_k,0)$ with $o_k\to \infty$ and $r_k=o_k-1$, and the formula \eqref{eq: B def} gives us
\begin{align}
    B_k=\begin{bmatrix} 2-\frac{1}{o_k} \\ 0       
    \end{bmatrix}\to B.
\end{align}
Finally, since $\|A-O_k\|/r_k\to 1$, we conlude that 
\begin{align}
 \|A-M\|+\sigma_1 \frac{\|A-O_k\|}{r_k}\|B_k-M\|=\sigma_2\frac{\|A-B_k\|\|A-O_k\|}{r_k|n|}   
\end{align}
tends to
\begin{align}
  \|A-M\|+\sigma_1\|B-M\|=\sigma_2\frac{\|A-B\|}{|n|}.    \end{align}
By assumption, $|n|<1$, and the only real oval among the four ovals corresponds to $(\sigma_1,\sigma_2)=(1,1)$ as stated in \Cref{ss: Cart o}.



\section{Symbolic Computation of Envelopes}\label{s: Comp} 

Let $F(y_1,y_2,t)$ be a nonzero polynomial in three variables that for fixed $t\in \RR$ defines a curve in $(y_1,y_2)\in \RR^2$. Following differential geometry arguments from \cite{bruce1992curves}, the envelope of the family of curves $V_t:=\{(y_1,y_2):F(y_1,y_2,t)=0\}$ is defined in \cite[Section 3.4]{cox1994ideals} as the solution in $(y_1,y_2)$ to the system
\begin{align}\begin{aligned}\label{eq: envelope t}
    F(y_1,y_2,t)=0,\\
    \frac{\partial F}{\partial t}(y_1,y_2,t) =0.
\end{aligned}
\end{align}
This means that we obtain the envelope by eliminating the variable $t$ from \eqref{eq: envelope t}. Seeing $F$ as a polynomial in $t$ with coefficients in $(y_1,y_2)$, the envelope is equivalently given by the determinant of the Sylvester matrix of $F$ and $\partial F/\partial t$. In simple examples, we can use this definition to calculate the envelope by hand.

\begin{example} Consider the parabola parametrized by $(t^2,t)$. We parametrize the normal at $(t^2,t)$ by $(t^2,t)+\lambda (-1,2t)$, $\lambda\in \RR$. Therefore, we define   
\begin{align}
    F(y_1,y_2,t):=\det \begin{bmatrix}
        y_1-t^2 & -1\\ y_2-t & 2t
    \end{bmatrix}=2ty_1+y_2-2t^3-t.
\end{align}
Then, \eqref{eq: envelope t} becomes
\begin{align}
   2ty_1+y_2-2t^3 -t&=0,\label{eq: first}\\
   2y_1-6t^2-1&=0.\label{eq: sec}
\end{align}
From \eqref{eq: sec}, we get $y_1=3t^2+1/2$ and combining this with \eqref{eq: sec}, we get $y_2=-4t^3$. Eliminating $t$ by hand yields the envelope $2(2y_1-1)^3=27y_2^2$. 
\hfill$\diamondsuit\,$
\end{example}

In the setting of \Cref{s: Circles}, let $\mathcal C$ is a circle with center $O$ and radius $r$, parametrized rationally by $R(t)$. Then, $N_t=R(t)-O$ is the normal to the circle at $t$. We rewrite \eqref{eq: real point} as
\begin{align}\begin{aligned}\label{eq: real point FF}
 \|Y-R(t)\|^2 \Big(\big(A-R(t)\big)\cdot \iota(N_t)\Big)^2-n^2\|A-R(t)\|^2\Big(\big(Y-R(t)\big)\cdot \iota(N_t)\Big)^2=0, 
\end{aligned}
\end{align}
and note that it is a rational expression in $t$. We define the following polynomial in the polynomial ring $\CC[y_1,y_2,t,r,n]$, 
\begin{align}\label{eq: real point F}
    F_{r,n}(y_1,y_2,t):=\textnormal{ numerator of } \eqref{eq: real point FF}.
\end{align}
We may without loss of generality fix $A=(0,0)$ and $O=(1,0)$; refracted rays are naturally equivariant with respect to translation, rotation, and scaling. Viewing $F_{n,r}$ as a polynomial in $t$ with coefficients that are polynomial in $y_1,y_2,r$ and $n$, this allows us to calculate the determinant of the Sylvester matrix using \texttt{Macaulay2} via the code below. This took 2 hours, 19 minutes and 26 seconds to run on a 13th Gen Intel(R) Core(TM) i9-13900H CPU running at 2.60 GHz.
\begin{lstlisting}[language=Macaulay2]
R = QQ[y_1,y_2,t,r,n]

iota = (V) -> matrix{{-V_(1,0)},{V_(0,0)}}

A = matrix{{0},{0}}
Circ = matrix{{1},{0}}

R = matrix{{r*(2*t)/(1+t^2)},{r*(t^2-1)/(1+t^2)}}+Circ 
Y = matrix{{y_1},{y_2}}

diffAR = transpose(A-R)*(A-R)
diffYR = transpose(Y-R)*(Y-R)
normal = R-O
dotAxnormal = transpose(A-R)*iota(normal)
dotyxnormal = transpose(Y-R)*iota(normal)

F = diffYR*dotARnormal^2-n^2*diffAR*dotYRnormal^2
Fpoly = numerator(F_(0)_(0))

S = QQ[y_1,y_2,r,n][t]
Fpoly = sub(Fpoly,S)

factF = (factor(Fpoly))#1#0
SMAT = transpose(sylvesterMatrix(factF,diff(t,factF),t))
Final = det SMAT
\end{lstlisting}
The computed  polynomial consists of the factors:
\begin{align}\begin{aligned}
    &n^4, \quad (r-1)^2,\quad (r+1)^2, \quad y^2,\quad \big((x-1)^2+y^2-r^2\big)^2,\\
    &(x-1)^2(r^2n^2+n^2-1)-(y-r)^2,
\end{aligned}
\end{align}
and a polynomial 
\begin{align}\label{eq: long eq}
x^{12}r^{12}n^{12}+6x^{10}y^2r^{12}n^{12}+15x^8y^4r^{12}n^{12}+\cdots -6n^2-12x+1
\end{align}
of 1765 terms that is degree 12 in the four variables $x,y,r$ and $n$. 

As mentioned in the introduction, the evolute of a plane curve coincides with its ED-discriminant. We may compute the evolute of the Cartesian ovals from \Cref{ex: main} and \Cref{fig: CausticFig} as in \cite[Section 7]{draisma2016euclidean}. The result is a degree-12 equation of 49 terms: 
\begin{align}16384000x^{12} + 351768768x^{10}y^2  + \cdots- 167215104x + 11943936.
\end{align}
Specifying $r=1/3$ and $n=1/2$, this polynomial equals \eqref{eq: long eq}; the factor \eqref{eq: long eq} corresponds to complete caustics by refraction. Keeping $r$ and $n$ unknown in the ED-discriminant code resulted in a program that did not terminate in reasonable time.


In the example below, we perform similar computations for other conic sections and provide illustrations in \Cref{fig: Evolutes-Conic Sections}. This is possible since all conic sections may be parametrized in a similar way to the circle. However, we were not able to compute a general formula for the complete caustic, as we did for the circle. This is because too many variables are required to specify parabolas, ellipses, and hyperbolas for our code to terminate within reasonable time.

\begin{example}\label{ex: conic sections} Fix the radiant point $A=(0,0)$ and consider the parabola, ellipse and hyperbola 
\begin{align}
-x_1^2-x_2=1,\quad x_1^2+4(x_2+1)^2-1, \quad\textnormal{ and }\quad  x_1^2-4(x_2+1)^2=1,
\end{align}
respectively. In each case, with $n=2/3$, we calculate the red caustics shown in \Cref{fig: Evolutes-Conic Sections} using \eqref{eq: envelope t}. Their degrees are 18, 24 and 24, respectively.
\end{example}

\begin{figure}
\begin{minipage}[h]{0.47\linewidth}
\begin{center}
\includegraphics[width=0.7\linewidth]{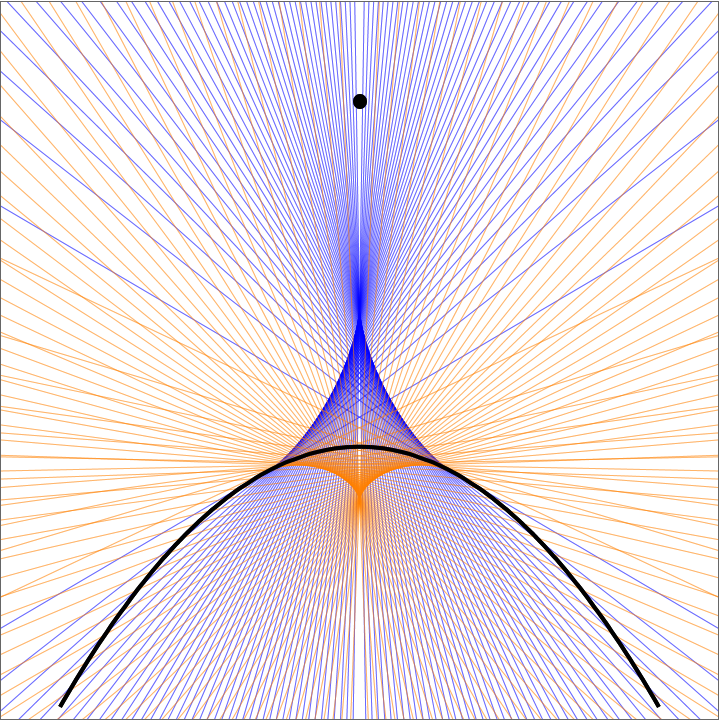} 
\label{qwe1}
\end{center}
\end{minipage}
\hspace{-1.75cm}
\begin{minipage}[h]{0.47\linewidth}
\begin{center}
\includegraphics[width=0.7\linewidth]{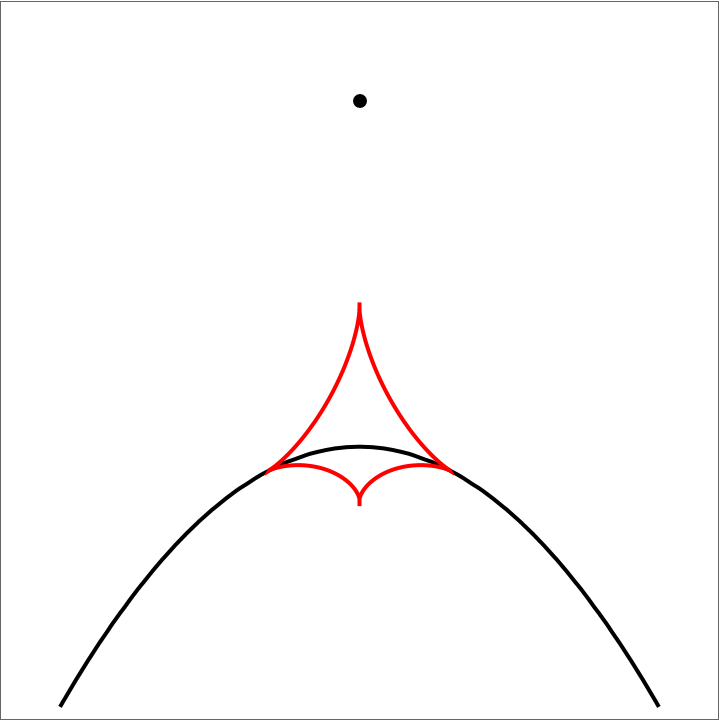} 
\label{qwe2}
\end{center}
\end{minipage}
\vfill
\vspace{0.25cm}
\begin{minipage}[h]{0.47\linewidth}
\begin{center}
\includegraphics[width=0.7\linewidth]{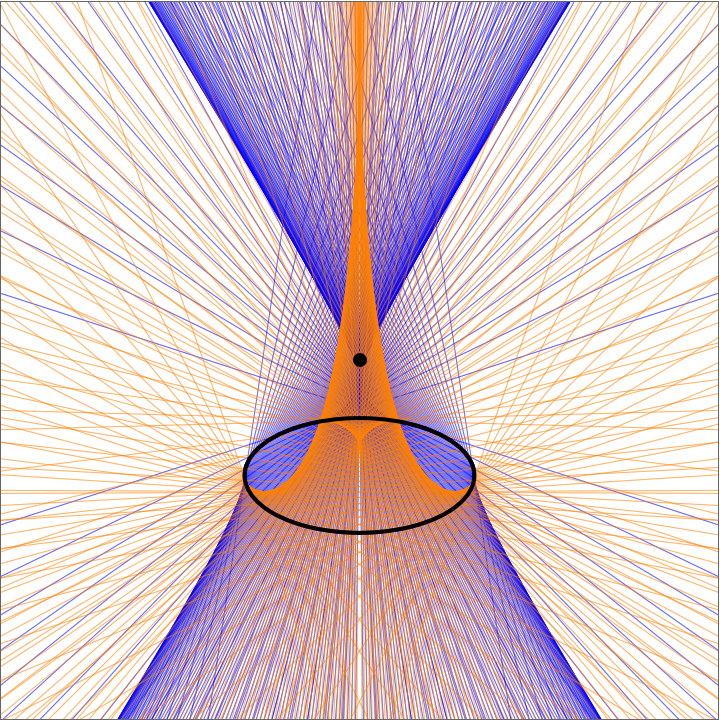} 
\label{qwe3}
\end{center} 
\end{minipage}
\hspace{-1.75cm}
\begin{minipage}[h]{0.47\linewidth}
\begin{center}
\includegraphics[width=0.7\linewidth,angle=-90]{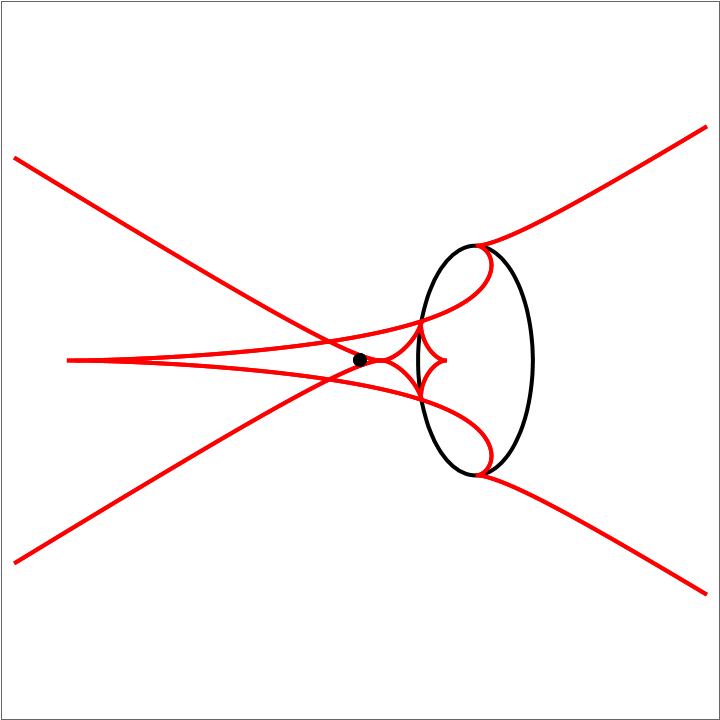} 
\label{qwe4}
\end{center}
\end{minipage}

\vfill
\vspace{0.25cm}
\begin{minipage}[h]{0.47\linewidth}
\begin{center}
\includegraphics[width=0.7\linewidth]{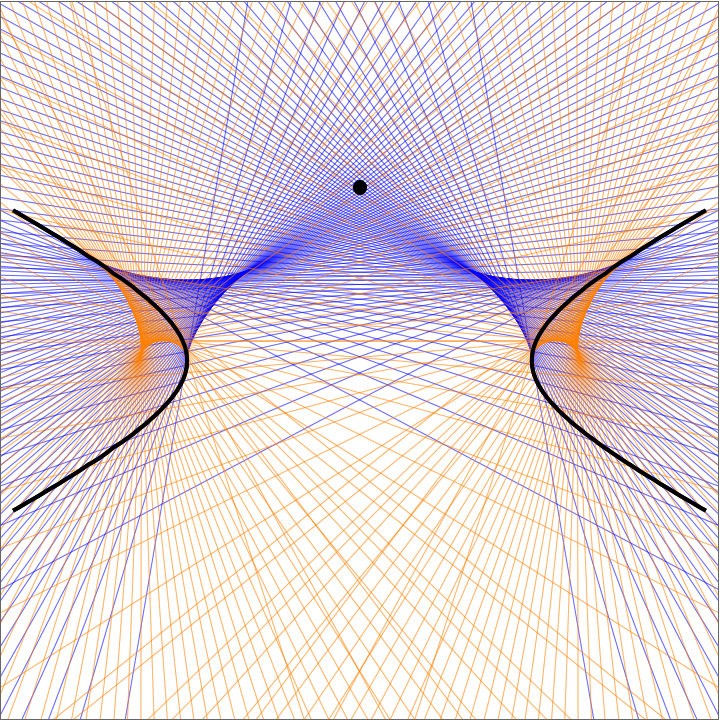} 
\label{qwe7}
\end{center}
\end{minipage}
\hspace{-1.75cm}
\begin{minipage}[h]{0.47\linewidth}
\begin{center}
\includegraphics[width=0.7\linewidth]{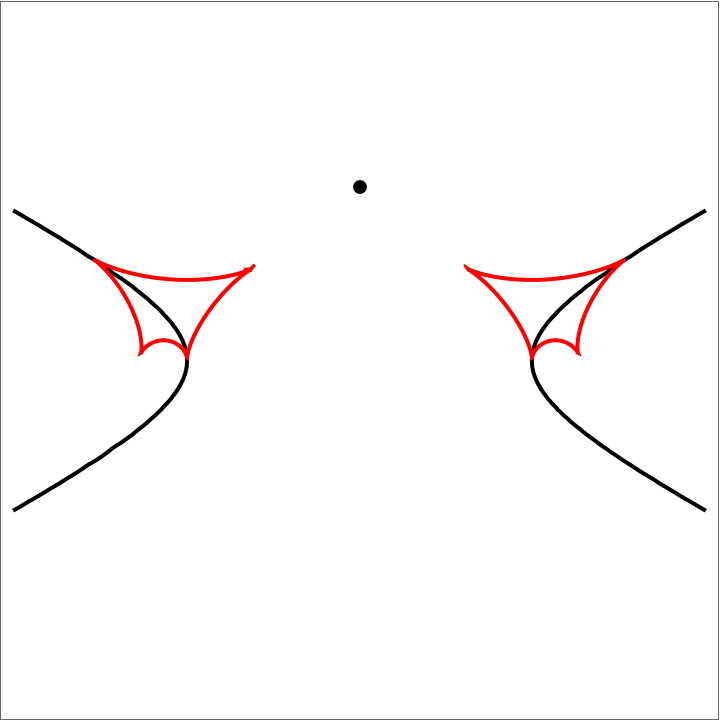} 
\label{qwe8}
\end{center}
\end{minipage}
    \caption{Complete caustics by refraction of a parabola, ellipse, and hyperbola across three rows. Their equations are given in \Cref{ex: conic sections}. Radiant points and conic sections are shown in black. Left column: Refracted rays for $n=2/3$ (blue) and $n=-2/3$ (orange). Right column: Caustic curves (red). }\label{fig: Evolutes-Conic Sections}
\end{figure}

We leave it as an open problem to (if they exist) find curves whose evolutes are the complete caustics shown in \Cref{fig: Evolutes-Conic Sections}, and to describe such curves given arbitrary parabolas, ellipses and hyperbolas.




\bibliographystyle{alpha}
\bibliography{VisionBib}



\end{document}